\pgfplotsset{compat=1.15}
\definecolor{light-gray}{gray}{0.95}
\newtheoremstyle{morespace}
  {\baselineskip}   
  {\baselineskip}   
  {\itshape}  
  {0pt}       
  {\bfseries} 
  {.}         
  {5pt plus 1pt minus 1pt} 
  {}          
\theoremstyle{morespace}
\newtheorem{theorem}{Theorem}
\theoremstyle{definition}
\newtheorem{remark}[theorem]{Remark}
\newtheorem{example}[theorem]{Example}
\newcommand{\eps}{\varepsilon}
\renewcommand{\phi}{\varphi}
\newcommand{\e}{\varepsilon}
\newcommand{\N}{\mathbb{N}}
\newcommand{\R}{\mathbb{R}}
\newcommand{\be}{\begin{equation}}
\newcommand{\ee}{\end{equation}}
\newcommand{\co}{\colon\, }
\newcommand{\cont}{\mathrm{con}} 
\newcommand{\arb}{\mathrm{arb}} 
\newcommand{\lin}{\mathrm{lin}} 
\newcommand{\Lip}{\mathrm{lip}} 
\newcommand{\dF}{d_F}
\newcommand{\dG}{d_G}
\title[Noisy information and entropy numbers]{
Noisy nonlinear information   \\
and entropy numbers 
}
\author{
David Krieg, Erich Novak, Leszek Plaskota, \\ and Mario Ullrich
}
\date{\today}
\keywords{information-based complexity, 
optimal algorithms, adaption, continuous measurements, noise}
\begin{document}

\begin{abstract}
It is impossible to recover a vector from $\R^m$ 
with less than $m$ linear measurements,
even if the measurements are chosen adaptively.
Recently, it 
has been shown that one can recover vectors from $\R^m$ with arbitrary precision 
using only 
$O(\log m)$ continuous
(even Lipschitz) adaptive measurements,
resulting in an exponential speed-up of continuous information compared to linear information
for various approximation problems.
In this note, 
we characterize the quality of 
optimal \mbox{(dis-)}continuous information 
that is disturbed by deterministic noise 
in terms of entropy numbers. 
This shows that in the presence of noise the potential gain of 
continuous over linear measurements is 
limited, but significant in some cases.

\end{abstract}

\maketitle


\section{Preliminaries}
\label{sec:intro}

Hans Triebel and his co-authors 
used entropy numbers to study the distribution of eigenvalues of differential operators, 
see~\cite{CaTr80,EdTr96,HaTr94}. 
In this note we show that entropy numbers
can also be used to characterize the quality of optimal approximations of operators by algorithms relying 
on information disturbed by determinstic noise.
Our main motivation is the recent paper~\cite{KNU25} 
on noise-free continuous information,
where it is shown that adaption can lead to an exponential speed-up
both compared to non-adaptive continuous information and compared to adaptive linear information. 
It turns out that something similar is not possible with noise, 
but there can still be advantages of nonlinear adaptive information.

Specifically, we 
study minimal worst-case errors for approximating 
a~mapping $$S\co F\to G$$ 
for some set $F$ and a metric space $G$, that can be achieved with \emph{algorithms} 
that have only access to a limited number $n$ of 
measurements. For us, and as it is typical in information-based complexity \cite{NW1,Pl96,TWW88}, 
an algorithm consists of some 
\emph{information mappings} (aka~measurements) 
and a \emph{reconstruction mapping},  
where we put special emphasis on the case where the information is subject to  
some deterministic \emph{noise}. 

That is, a mapping 
$A_n\colon F\to G$ is an algorithm for approximating $S$ if it is of the form 
$A_n(f)=\varphi(\mathbf y)$, where 
$\varphi \colon \mathbb R^n\to G$ is an arbitrary 
(reconstruction) mapping and 
$\mathbf{y}=(y_1,y_2,\ldots,y_n)\in\R^n$ 
is 
information about $f$.  
It satisfies 
\begin{equation}\label{eq:info}|y_i-\lambda_i(f;y_1,\ldots,y_{i-1})|\le\delta,\qquad  1 \le i\le n,
\end{equation}
for some functionals 
\begin{equation}\label{eq:funals}\lambda_i(\,\cdot\,;y_1,\ldots,y_{i-1})\colon F \to [-1,1]
\end{equation}
that belong to a given class $\Lambda.$
To avoid trivial cases, we will also assume that $\delta<1$. 
We notice that the restriction of the range of $\lambda_i$ to $[-1,1]$ (or to any other compact interval)
is needed to make 
the results of this paper meaningful, cf. Remark~\ref{rem0}.
Also note that the noise is deterministic and independent over the different measurements, 
which is sometimes called \emph{adversarial}. 
One may think of $\delta$ as the machine precision or 
other limitation of the \emph{measuring device}, and 
$\lambda_i$ as 
the \emph{ideal} measurement.

Observe that the choice of the successive functionals (measurements) 
$\lambda_i$ depends on the 
previously obtained information $y_1,\ldots,y_{i-1}$ and 
is therefore called \emph{adaptive}. 
Moreover, by~\eqref{eq:info}, we assume that the
results of the measurements $\lambda_i$ are only known up to some error~$\delta$ in the form of a vector $\mathbf{y}$ that 
we call 
\emph{adaptive noisy information} about $f$. 
We also distinguish the subclass of \emph{nonadaptive} 
information, in which case the $\lambda_i$'s do not 
depend on $y_1,\ldots,y_{i-1}.$  

Hence, an algorithm $A_n$ is specified by $\phi$ and (a rule to choose) the $\lambda_i.$ 
Its \emph{(worst case) error for noise bounded by $\delta$} is defined by 
$$
e(A_n,\delta)\,:=\,\sup\Bigl\{\dG(S(f),\varphi(\mathbf y))\mid\,f\in F,\;\mathbf{y}\,
\text{ satisfies}~\eqref{eq:info}\Bigr\},
$$
where $\dG$ is the metric of $G.$
We allow free choice of $\phi$ and $\lambda_i,$ 
and we study how the minimal error of 
algorithms $A_n$, defined as
$$ e_n(S,\delta)\,:=\,\inf_{A_n}\, e(A_n,\delta), $$
depends on $n$, $\delta,$ and 
the class $\Lambda$ of functionals.
We distinguish the classes of \emph{all}
arbitrary (including discontinuous), continuous, Lipschitz (with fixed constant $L$), and linear functionals, and denote the respective minimal errors as
$$e_n^\arb(S,\delta),\quad e_n^\cont(S,\delta),\quad e_{n,L}^\Lip(S,\delta),\quad e_n^\lin(S,\delta).$$ 

In case of continuous and Lipschitz functionals we obviously assume that $F$ is equipped with a metric $d_F,$ and in case of linear functionals we assume that $F$ is a set in a linear space. We refer to~\cite{KNU24} for a recent exposition on adaption (and randomization) for noise-free linear measurements.

\begin{remark}\label{rem0}
If we extend the range of functionals from $[-1,1]$ to $\R$ or do not impose any restriction on the Lipschitz constant (in case of Lipschitz functionals), then we find ourselves in a rather unwanted situation where the value of any functional $\lambda$ can be obtained with arbitrary precision $\delta_1>0.$ Indeed, letting $\lambda_1=\eta\lambda$ with $\eta=\delta/\delta_1$ and measuring $\lambda_1(f)$ one gets $y_1$ satisfying $|y_1-\lambda_1(f)|\le\delta,$ and this means that $y=y_1/\eta$ satisfies $|y-\lambda(f)|\le\delta_1.$ This simple observation leads to a conclusion that then $e_n(S,\delta)=\lim_{\delta_1\to 0^+} e_n(S,\delta_1)$ for any $\delta>0.$
 
As an illustration, we show that if $G$ is a separable metric space then one can construct an approximation with arbitrarily small error $\varepsilon>0$ using just one noisy discontinuous measurement with accuracy $\delta<1$. Indeed, we can select a sequence $\{g_j\}_{j=1}^\infty$ that is dense in $G,$ and let $\lambda(f)=2\cdot k(f)$, where $k(f)$ is such that $
\dG(S(f),g_{k(f)})\le\varepsilon$.
Then, $k(f)$ can be uniquely determined from information $y$ satisfying $|y-\lambda(f)|\le\delta.$ The approximation $A_1(f)=g_{k(f)}$ is well defined and has error $\varepsilon.$ 

Finally, observe that if $F$ is the unit ball in a normed space then \eqref{eq:funals} is equivalent to the quite natural assumption that the allowable linear functionals are exactly those whose norm is at most $1.$
\end{remark}

\section{Main results}\label{sec:main}

There is a characterization of the minimal error $e_n^\arb(S,\delta)$ for arbitrary measurements in terms of the entropy numbers of the mapping $S\co F\to G$. For $n\in \N$, the $n$-th entropy number of $S$ is defined by 
\[
 \e_n(S) \,:=\, \inf\{ \varepsilon>0 \mid S(F) \text{ can be 
 covered by } 2^n \text{ balls 
 of radius } \varepsilon \}.
\]
The characterization is as follows.

\begin{theorem} \label{thm:entropy}
For all $n\in\N$ and $0< \delta<1$, we have 
\[
\e_{n k_\delta}(S) 
\;\le\; e_n^\arb(S,\delta) 
\;\le\; \e_{n k_\delta'}(S) 
\]
where 
$k_\delta=\lceil\log_2(1/\delta)\rceil$ and
$k_\delta'=\lceil\log_2(1/\delta+1)-1\rceil.$
\end{theorem}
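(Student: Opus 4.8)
The plan is to prove the two inequalities separately, each by relating a single noisy measurement with range $[-1,1]$ and accuracy $\delta$ to a fixed number of reliably transmitted bits.

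\textit{Upper bound} $e_n^\arb(S,\delta)\le \e_{nk_\delta'}(S)$. First I would record the encoding capacity of one measurement. One can place $M:=2^{k_\delta'}$ points $t_1<\dots<t_M$ in $[-1,1]$ with consecutive gaps strictly larger than $2\delta$; this is possible precisely because $2^{k_\delta'}<1/\delta+1$, so that $(M-1)\cdot 2\delta<2$ and the points fit. Then the closed neighborhoods $[t_j-\delta,t_j+\delta]$ are pairwise disjoint, and from any admissible observation $y$ the intended index $j$ is recovered uniquely, so a single noisy measurement transmits one of $2^{k_\delta'}$ symbols without error. For $\eta>0$, cover $S(F)$ by $2^{nk_\delta'}$ balls of radius $\e_{nk_\delta'}(S)+\eta$, index them by strings in $\{0,\dots,2^{k_\delta'}-1\}^n$, and let the $i$-th (nonadaptive) measurement output the $t$-encoding of the $i$-th digit of the index of a ball containing $S(f)$. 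Exact decoding recovers the string, and returning the corresponding center yields error at most $\e_{nk_\delta'}(S)+\eta$; letting $\eta\to0$ gives the bound (in fact already for nonadaptive information).

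\textit{Lower bound} $\e_{nk_\delta}(S)\le e_n^\arb(S,\delta)$. Here I would fix an arbitrary algorithm $A_n$ and build an adversary that collapses its outcomes. Choose representatives $r_1,\dots,r_K\in[-1,1]$ with $K=\lceil 1/\delta\rceil$ whose $\delta$-balls cover $[-1,1]$; then every value $\lambda_i(f;\cdot)\in[-1,1]$ is within $\delta$ of some $r_j$, so reporting the smallest such $r_j$ is admissible noise and makes each $y_i$ a deterministic function of $f$ and the previous observations. Running $A_n$ against this adversary produces for each $f$ an observation vector $\mathbf{y}(f)\in\{r_1,\dots,r_K\}^n$, and since the reconstruction depends only on $\mathbf{y}$, two inputs with $\mathbf{y}(f)=\mathbf{y}(f')$ are assigned the same output $\varphi(\mathbf{y}(f))$ along admissible runs. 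Grouping $F$ by the value of $\mathbf{y}(f)$ gives at most $K^n$ classes, on each of which $A_n$ returns a single point, so $S(F)$ is covered by at most $K^n$ balls of radius $e(A_n,\delta)$ centered at these outputs. Since $2^{k_\delta}=2^{\lceil\log_2(1/\delta)\rceil}$ is a power of two that is at least $1/\delta$, it is at least $\lceil1/\delta\rceil=K$, so $K^n\le 2^{nk_\delta}$; hence $S(F)$ is coverable by $2^{nk_\delta}$ balls of radius $e(A_n,\delta)$, giving $\e_{nk_\delta}(S)\le e(A_n,\delta)$, and an infimum over $A_n$ finishes it.

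The main obstacle is pinning down the exact exponents rather than their asymptotics, since the two directions are governed by two genuinely different counting problems on $[-1,1]$ with tolerance $\delta$: the packing number of points with pairwise-disjoint closed $\delta$-neighborhoods, which controls how many symbols can be sent error-free and yields $k_\delta'$, versus the covering number by $\delta$-balls, which controls how many outcomes an adversary can force and yields $k_\delta$. Getting the ceilings and the strict-versus-nonstrict separations right is exactly where the precise constants appear: disjointness of the closed neighborhoods forces gaps strictly above $2\delta$ (otherwise an adversary sitting on a shared endpoint defeats decoding), and the inequality $\lceil1/\delta\rceil\le 2^{k_\delta}$ is what lets $K^n$ balls be absorbed into $2^{nk_\delta}$ balls.
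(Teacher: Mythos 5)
Your proposal is correct and follows essentially the same route as the paper: the upper bound encodes the index of a near-optimal $2^{nk_\delta'}$-cover nonadaptively, using $2^{k_\delta'}$ values per measurement whose closed $\delta$-neighborhoods are pairwise disjoint (possible exactly because $2^{k_\delta'}<1/\delta+1$), while the lower bound lets an adversary round each ideal measurement value to a fixed finite grid so that only boundedly many observation vectors, and hence outputs, can occur, which then serve as centers of a cover of $S(F)$ by balls of radius $e(A_n,\delta)$. The only cosmetic difference is in the lower bound, where you use $\lceil 1/\delta\rceil$ grid points and then absorb this into $2^{k_\delta}$, whereas the paper works directly with the $2^{k_\delta}$ equispaced points $w_i=-1+(2i-1)2^{-k_\delta}$ with half-width $2^{-k_\delta}\le\delta$.
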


\medskip
\goodbreak

For continuous measurements, we get the following bound. 

\begin{theorem} \label{thm:cont}
If the mapping $S$ is continuous then for all $n\in\N$ and 
$0\le \delta<1$, we have 
$$ e^{\cont}_n(S,\delta) \,\le\, \e_n(S). $$
\end{theorem}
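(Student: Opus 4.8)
The plan is to prove the bound by constructing, for every $\eta>0$, a continuous algorithm $A_n$ with noise level $\delta$ whose worst-case error is at most $\e_n(S)+\eta$; taking the infimum over algorithms and then letting $\eta\to0$ gives $e^{\cont}_n(S,\delta)\le\e_n(S)$. First I would fix the radius $r:=\e_n(S)+\eta/2$ and, by the very definition of the entropy number, cover $S(F)$ by $2^n$ balls of radius $r$ with centers $g_1,\dots,g_{2^n}\in G$. The functions $\rho_j(f):=\dG(S(f),g_j)$ are then continuous on $F$ (as compositions of the continuous $S$ with the metric), and the covering property reads $\min_{1\le j\le 2^n}\rho_j(f)\le r$ for every $f\in F$. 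The whole problem thus reduces to identifying, by $n$ continuous noisy measurements, an index $j^\ast$ with $\rho_{j^\ast}(f)$ close to $r$, after which the reconstruction outputs $g_{j^\ast}$.

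The second step is a noise-robust binary search. I would arrange the indices $1,\dots,2^n$ as the leaves of a complete binary tree of depth $n$ and let the $i$-th measurement perform the $i$-th branching. If $I\subseteq\{1,\dots,2^n\}$ is the index set of the subtree reached after the first $i-1$ decisions (a set determined by $y_1,\dots,y_{i-1}$), split it into two halves $I^0,I^1$, set $m^b(f):=\min_{j\in I^b}\rho_j(f)$ for $b\in\{0,1\}$, and define the adaptive functional
\[
\lambda_i(f;y_1,\dots,y_{i-1}):=\max\bigl\{-1,\min\{1,\alpha(m^0(f)-m^1(f))\}\bigr\},
\]
where $\alpha>0$ is a steepness parameter chosen below. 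Each $\lambda_i(\cdot;y_1,\dots,y_{i-1})$ is continuous in $f$ and takes values in $[-1,1]$, so it is an admissible continuous functional. The reconstruction reads the bits off the noisy data (setting $b_i:=1$ if $y_i\ge0$ and $b_i:=0$ otherwise), follows them down to a leaf $j^\ast$, and outputs $\varphi(\mathbf y):=g_{j^\ast}$, which is a well-defined function of $\mathbf y$.

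The heart of the argument is a one-step analysis showing that each decision costs almost nothing. Writing $v_i:=\min_{j\in I}\rho_j(f)$ for the best distance available inside the current subtree, I would track $v_{i+1}=m^{b_i}(f)$. If $|\lambda_i(f)|>\delta$, then $y_i$ keeps the sign of $\lambda_i(f)$, so $b_i$ selects the branch with the smaller $m^b$ and $v_{i+1}=v_i$. The only way adversarial noise can force the larger branch is when $|\lambda_i(f)|\le\delta<1$; there the truncation is inactive, so $\lambda_i(f)=\alpha(m^0(f)-m^1(f))$ and hence $|m^0(f)-m^1(f)|\le\delta/\alpha$, giving $v_{i+1}\le v_i+\delta/\alpha$. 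In either case $v_{i+1}\le v_i+\delta/\alpha$, so after $n$ steps the selected leaf satisfies $\rho_{j^\ast}(f)=v_{n+1}\le r+n\delta/\alpha$. Choosing $\alpha\ge 2n\delta/\eta$ yields $\dG(S(f),g_{j^\ast})\le r+\eta/2=\e_n(S)+\eta$, uniformly over $f\in F$ and over all noise obeying~\eqref{eq:info}.

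I expect the main obstacle to be precisely this interaction between continuity and adversarial noise at the decision boundaries: a continuous functional cannot jump between the two branch values, so near a tie it must pass through the ambiguous zone $[-\delta,\delta]$ in which the noise, not the data, fixes the branch. The resolving idea is that such ambiguity can only occur when the two subtree minima are within $\delta/\alpha$ of each other, so a wrong turn costs at most $\delta/\alpha$ in distance; the steepness $\alpha$ then makes the total loss accumulated over the $n$ sequential, adaptive decisions as small as desired. The boundary case $\delta=0$ is covered \emph{a fortiori}, since then no wrong turn is ever possible and the error is at most $r$.
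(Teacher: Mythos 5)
Your proposal is correct and follows essentially the same route as the paper: a noise-robust bisection over a $2^n$-ball entropy cover, using clipped continuous functionals whose steepness parameter (your $\alpha$, the paper's $2/\eta$) guarantees that noise can only force a wrong branch when both branches are nearly tied, so the accumulated loss over the $n$ adaptive steps is at most $n\delta/\alpha$ and can be made arbitrarily small before taking limits. The only cosmetic difference is bookkeeping: the paper measures the clipped distance from $S(f)$ to the union of balls in one half and inflates the radii by $\delta\eta$ at each step, while you compare the minimum distances to the centers of the two halves and track that minimum directly.
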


\medskip
Combining this with Theorem~\ref{thm:entropy} and 
$e^{\arb}_n(S,\delta)\le e^{\cont}_n(S,\delta)$, 
we obtain upper and lower estimates for the minimal errors for continuous measurements in terms of entropy numbers, that are quite sharp in case of `large' noise $\delta\ge\delta_0>0$; namely
\begin{equation}\label{eq:cont-intro}
 \e_{bn}(S) 
 \;\le\; e_n^\cont(S,\delta) \;\le\; \e_n(S),
\end{equation}
where $b=\lceil\log_2(1/\delta_0)\rceil$.
We do not fully understand the behavior 
of $e_n^\cont(S,\delta)$ for `small' noise, but we do know that there are nontrivial problems $S$ such that, even for arbitrarily small $\delta>0,$ continuous information has the same power as discontinuous information, see Example~\ref{example1}.
%
%
%

\medskip
We illustrate the obtained results by the example 
$$S=I_{p,q}^m\co B^m_p\to\ell^m_q$$ 
with $I_{p,q}^m(x)=x$ for $x\in\R^m$, being the identity on $\R^m$ and $B^m_p$ the unit ball of $\ell_p^m$.
That is, we want to approximate vectors 
from $B^m_p$ in the norm of $\ell^m_q$. 
Let us first collect the bounds that we have 
for this example in the different settings.

For noise-free information ($\delta=0$), 
it has been observed in~\cite{KNU25} that 
\[
e^\cont_{n}(I_{p,q}^m,0) \;=\; 0 
\qquad\text{ for }\quad n > \lceil \log_2(m+1)\rceil. 
\]
This bound on 
the amount $n$ of necessary information does 
not depend on $p$,  $q$ and 
the desired error bound $\e$. 
For linear  or nonadaptive continuous information and $p=q$
the error is 1 for all $n < m$, hence 
one cannot improve the initial error ($n=0$) at all. 
This follows from the Borsuk-Ulam theorem, 
see e.g.~\cite{KNU25}. 
%



In addition, for 
information with noise bounded by $\delta$, 
it is not difficult to see that
\[
e^\lin_n(I_{p,q}^m,\delta) \;\ge\; \delta\,\|I_{p,q}^m\|
\qquad\text{ for all }\; n\ge 1, 
\]
where $\|I_{p,q}^m\|=\max\{\|x\|_q\mid \|x\|_p=1\}$ 
is the norm of the embedding $I_{p,q}^m,$
since for any $x\in B_p^m$, the inputs $\delta x$ and $-\delta x$ 
can lead to the same information $\mathbf y=0$.


Concerning continuous and more general information, let us first note that the entropy numbers of $I_{p,q}^m$ satisfy  
\[
\e_n(I^m_{p,q}) \;\asymp\; \left(\frac{\log(m/n+1)}
{n}\right)^{\frac{1}{p}-\frac{1}{q}}
\]
for $\log(m)\le n \le m$ and $p\le q$, 
and 
$\e_n(I^m_{p,p})\asymp 2^{-n/m}$ for all $n\in\N$, 
%
see~\cite{kuehn01,sch84,VybKos},
with the hidden constants possibly depending on~$p$ 
and~$q$, but not on~$m$ and~$n$.
(We write `$a_n\asymp b_n$ for $n\in\mathbb{I}\subset\N$' if $c\cdot b_n\le a_n\le C\cdot b_n$ for some $c,C>0$ (aka the hidden constants) and all $n\in\mathbb{I}$; similarly for the one-sided inequalities $\lesssim$ and $\gtrsim$.)
%
From Theorem~\ref{thm:entropy} and~\eqref{eq:cont-intro} 
we obtain, for all $\delta \in (0,1)$, 
the upper and lower bound 
\[
c_\delta \left(\frac{\log(m/n+1)}
{n}\right)^{\frac{1}{p}-\frac{1}{q}}
\;\le\;
e_n^{\cont}(I_{p,q}^m,\delta) 
\;\le\;
C \left(\frac{\log(m/n+1)}
{n}\right)^{\frac{1}{p}-\frac{1}{q}}
\] 
for $\log(m) \le n \le c_\delta m$ and $p< q$, 
and some $c_\delta\gtrsim\log(1/\delta)^{-1}$
that possibly depends on $p,q$.
From~\eqref{eq:cont-intro} and \cite[12.1.13]{Pietsch-ideals}, 
we also obtain 
$({\delta}/{2})^{n/m} \le e_n^{\cont}(I_{p,p}^m,\delta) \le 4\cdot (1/2)^{n/m}$ for all $n,m\in\N$ and $1\le p\le \infty$. 
(Mind the index shift in our definition of $\eps_n$.)

\medskip

We now interpret the bounds and compare the different settings. 
We think it is 
interesting to compare continuous 
(or more general) information with linear information. 
We do it here only for the cases 
$(p,q)=(2,\infty)$ and 
$(p,q)=(1,2)$ where the results are quite different.  

For linear information 
we have  
$e_n^\lin(I^m_{2,\infty},0) \asymp 1$
for  $2n\le m$, 
see~\cite{vyb08},
while 
the above implies 
$e_n^\cont(I^m_{2,\infty},\delta) <\eps$ 
for $n\asymp \log(m)\,\e^{-2}$.
This shows that noisy continuous information 
(for any $\delta<1$) is much more powerful 
than noise-free linear information
for uniform approximation on the Euclidean ball. 

In contrast, we have 
$e_n^\lin(I^m_{1,2},0)\asymp \e_n(I^m_{1,2})$
for $2n \le m$, see again~\cite{vyb08}. 
Therefore, noise-free linear information is of the same 
power as arbitrary (discontinuous) noisy information, 
as long as $2n \le m$ and the noise is not too small. 
To a certain extent this is even true if the 
linear information is noisy since 
\begin{equation*} 
e_n^\lin(I^m_{1,2},\delta)\asymp 
e_n^\lin(I^m_{1,2}, 0) + \delta
\end{equation*}
for all $n\in\N$ follows from work in compressed sensing. Here we can almost use Foucart~\cite{Fou22},  
Theorem 14.6 on page 121. 
We need to change the normalization (see 
also Remark 14.7 for yet another 
normalization) and actually we need to use 
``Rademacher'' instead of ``Gaussians'', 
since we need that the measurement matrix $A$ 
satisfies
$|a_{i,j}| \le 1$ for each entry to guarantee 
``if $\Vert x \Vert_1 \le 1$ then 
$ \Vert Ax\Vert_\infty \le 1$'' which is our condition 
on the information functionals, i.e., 
$\Vert \lambda_i \Vert \le  1$. 
See also Foucart and Rauhut~\cite{FR13},  
Theorem 9.13.
So for instance, if we want to achieve an error of order $\varepsilon$
and the noise is of order $\varepsilon^{100} \lesssim \delta \lesssim \varepsilon$,
then the required amount of linear information has the order $\log(m) \varepsilon^{-2}$
while the required amount of continuous information has the order at least $\log(m)\, \varepsilon^{-2} (\log 1/\varepsilon)^{-2}$.
This shows that noisy continuous information 
is roughly as powerful
as noise-free linear information for Euclidean approximation on the $\ell_1^m$-ball.

\medskip
Theorems~\ref{thm:entropy} and \ref{thm:cont} are proven respectively in Sections~\ref{sec:entropy} and \ref{sec:ub-cont}, where also corresponding bounds on Lipschitz measurements are presented in Theorem~\ref{thm:lip}. The last Section~\ref{sec:diago} is devoted to diagonal operators.

\bigskip
\goodbreak

\section{Arbitrary measurements}\label{sec:entropy}

Here we prove Theorem~\ref{thm:entropy}.

\begin{proof}
First we show the upper bound on $e^\arb_n(S,\delta)$. 
Fix $\eta>0$ and let
$G_n\subset G$ be a set such that $\#G_n=2^{n k_\delta'}$ and 
$$\min_{g\in G_n}\dG(S(f),g) < \e_{n k_\delta'}(S) + \eta\quad\mbox{for all}\quad f\in F.$$ 
We number the elements as 
$$G_n=\{g_{i_1,\ldots,i_n}:\;1\le i_j\le 2^{k_\delta'},\;1\le j\le n\},$$
and decompose $F$ into $2^{n k_\delta'}$ pairwise disjoint sets $(F_{i_1,\ldots,i_n})_{1\le i_j\le 2^{k_\delta'}}$ 
such that
$$
 f \in F_{i_1,\ldots,i_n}
 \quad
 \Longrightarrow
 \quad
 \dG(S(f),g_{i_1,\ldots,i_n})=\min_{g\in G_n}\dG(S(f),g).
$$
Then we construct an algorithm $A_n$ (based on arbitrary noisy information) as follows. 
For $f\in F_{i_1,\ldots,i_n}$ we set 
$$\lambda_j(f)=v_{i_j},\quad\mbox{where}\quad v_{i_j}=-1+2\,\frac{i_j-1}{2^{k_\delta'}-1},\quad 1\le j\le n.$$ 
Since $\delta<1/(2^{k_\delta'}-1)$ the $\delta$-neighborhoods of $v_{i_j}$ are pairwise disjoint. Hence, for each $\mathbf y$ satisfying $|y_j-\lambda_j(f)|\le\delta,$ $1\le j\le n,$ we can determine the correct value of $\lambda_j(f)$ for all $j\le n$, and hence we can determine $i_1,\hdots,i_n$ with $f\in F_{i_1,\ldots,i_n}$. The approximation $A_n(f)=\varphi(\mathbf y)$ defined by $\varphi(\mathbf y)=g_{i_1,\ldots,i_n}$ satisfies
\[
e(A_n,\delta) \;\le\; \e_{n k_\delta'}(S) + \eta.
\]
Taking $\eta\to0$ implies the result.

\smallskip
To show the lower bound on $e_n^\arb(S,\delta)$, we let $A_n$ be any algorithm that uses a reconstruction map $\varphi$ and $n$ adaptive functionals $\lambda_j$. Define $w_i:=-1+ (2i-1) 2^{-k_\delta}$ with $1\le i\le 2^{k_\delta}$ and 
\[
G_n = \left\{\phi(w_{i_1},\dots,w_{i_n})\co 1\le i_j\le 2^{k_\delta}, 1\le j\le n \right\}\subset G.
\]
Now, for $f\in F$, we take 
$w_{i_1}$ with $i_1\in\{1,\dots,2^{k_\delta}\}$ such that 
$$w_{i_1}-2^{-k_\delta}\le \lambda_1(f)<w_{i_1}+2^{-k_\delta}$$
(if $\lambda_1(f)=1$ then $i_1=2^{k_\delta}$). By induction, for $j=2,3,\ldots,n$, we take $w_{i_j}$ such that 
\[
w_{i_j}-2^{-k_\delta}\le \lambda_j(f;w_{i_1},\ldots,w_{i_{j-1}})<w_{i_j}+2^{-k_\delta}
\]
(if $\lambda_j(f;w_{i_1},\ldots,w_{i_{j-1}})=1$ then $i_j=2^{k_\delta}$).
Since $2^{-k_\delta}\le \delta$, 
we observe that $\mathbf y=(w_{i_1},\ldots,w_{i_n})$ is noisy information about $f,$ which implies 
\[ e(A_n,\delta)\,\ge\,d_G(S(f),\phi(\mathbf y))\,\ge\,\min_{g\in G_n} d_G(S(f),g).
\]
Since $f$ is arbitrary and $\#G_n\le 2^{nk_\delta}$, this completes the proof. 
\end{proof}

\begin{remark}
Observe that the information constructed in the proof of the upper bound is nonadaptive. Hence we also showed that if arbitrary functionals are allowed then adaptive information is of the same power as nonadaptive information. While this claim is trivial in the noise-free case $\delta=0$ (then any adaptive information is also nonadaptive), it is not so obvious in the presence of noise $\delta>0,$ since then the class of nonadaptive information is a proper subset of the class of adaptive information.
\end{remark}

\section{
Continuous measurements}
\label{sec:ub-cont}

We now 
deal with continuous measurements. For that we have to assume that $F$ is equipped with a metric $\dF.$ 
Here, unlike in the case of arbitrary functionals, the use of adaption is crucial. 
\medskip

We start with the proof of Theorem~\ref{thm:cont}.

\begin{proof}
Let $\delta<\delta^+\le 1$.
Let $B_1,\hdots,B_N$ with $N=2^n$ be closed balls of radius $r>\e_n(S)$ that cover $S(F).$
Choose arbitrary $\eta>0,$ and for any $I\subset\{1,2,\ldots,2^n\}$ with $\#I=2^{n-1}$ 
consider the continuous functional
\begin{equation}\label{eq:f} 
\lambda(f)=\frac{2}{\eta}\min\bigg\{\dG\bigg(S(f)\,,\,\bigcup_{i\in I} B_i \bigg),\delta^+\eta\bigg\} -1\in [-1,1].
\end{equation}
Suppose $y$ comes from noisy measurement of $\lambda(f),$ i.e., $|y-\lambda(f)|\le\delta.$ If $y>-1+\delta$ then $\lambda(f)>-1$, which implies that $S(f)$ is not in the balls $B_i$, $i\in I$, i.e., in one of the remaining balls. On the other hand, if $y\le -1+\delta$, we must have $\lambda(f)\le -1 + 2\delta$, meaning that the $\min$ is at most $\delta\eta.$ This, in view of $\delta<\delta^+,$ 
implies that the distance of $S(f)$ and the balls is at most $\delta\eta.$ Hence, if we increase the radius of the balls $B_i$, $i\in I$, by $\delta\eta$, 
we know that $S(f)$ is in one of these balls.

Now we can do $n$ bisection steps, where we possibly increase the radius of the balls in each step by $\delta\eta.$
We end up with a single ball of radius at most $r + n\delta\eta$
that contains $S(f)$.
The output of the algorithm shall be the center of this ball, and the error is at most $r+n\delta\eta.$ 
Since $\eta>0$ can be chosen arbitrarily small
and $r$ arbitrarily close to $\varepsilon_n(S)$, 
we have $e_n^\cont(S,\delta)\le\varepsilon_n(S).$
\end{proof}

\smallskip
As discussed in Section~\ref{sec:main}, the bound $e_n^\cont(S,\delta) \le \varepsilon_n(S)$ is (almost) sharp for `large' noise $\delta\ge\delta_0>0.$
For `small' noise, on the other hand, the bound is generally not sharp. Below we give an example where $e_n^\cont(S,\delta) \asymp e_n^\arb(S,\delta)$ for any $\delta \in (0,1)$ and hence 
$e_n^\cont(S,\delta)$ is much smaller than $\varepsilon_n(S)$
for `small' $\delta$. 

\smallskip
\begin{example}\label{example1}
Consider the approximation of the embedding $S=I^m_{\infty,\infty}: B_\infty^m\to\ell_\infty^m$ using 
continuous measurements. Then we can proceed as follows. Let $x=(x_1,x_2,\ldots,x_m)\in B_\infty^m.$ 
For each $i$th coordinate we do $r$ adaptive measurements: $\lambda_{i,1}(x)=x_i$ 
and, for $j=2,3,\ldots,r$, 
\begin{equation}\label{lij}
\lambda_{i,j}(x)=\left\{\begin{array}{rl}
     \delta^{1-j}(x_i-y_{i,j-1}), &\quad \text{if}\quad |x_i-y_{i,j-1}|\le\delta^{j-1}, \\
      \mathrm{sgn}(x_i-y_{i,j-1}), &\quad\mbox{otherwise},\end{array}\right.
\end{equation}
where $y_{i,j-1}$ is the noisy information about $x$ 
obtained from the $(j-1)$st measurement of $x_i.$ Then 
the total number of measurements equals $n=r\,m,$ and 
all the vectors $x$ that are indistinguishable with respect to 
information $$\mathbf y=(y_{1,1},\ldots,y_{1,r},\ldots,y_{m,1},
\ldots,y_{m,r})$$ are contained in a ball 
of radius $\delta^r$ (with respect to the $\infty$-norm). Taking as $\varphi(\mathbf y)$ the center of this ball
we obtain an algorithm with error $\delta^{n/m}.$

On the other hand, for $n=r\,m$ we have $\varepsilon_n(I_{\infty,\infty}^m)=2^{-n/m},$ which mans that 
$$e_n^\cont(I^m_{\infty,\infty},\delta)\le\varepsilon_{nk_\delta''}(I^m_{\infty,\infty})
\quad\mbox{with}\quad k_\delta''=\lceil\log_2(1/\delta)-1\rceil.$$
Hence, continuous information is as powerful as arbitrary information.
\end{example}

\begin{remark}[Noise-correction]
A possible interpretation of the technique used in Example~\ref{example1} is 
that any measurement $\lambda\in\Lambda^\mathrm{con}$, which is available a priori only up to noise $\delta$, can be estimated to arbitrary precision by using some other noisy continuous measurements. Hence, 
continuous measurements can be used for \emph{noise-correction} below the original noise level. 
The same is not possible with linear measurements.

To be precise, using the measurements~\eqref{lij} with $x_i$ replaced by $\lambda(f)$, we see that a sequence of $r$ noisy continuous measurements (with noise~$\delta$), can be used to estimate $\lambda(f)$ up to error $\delta^r$. That is, with $r\ge \log(\eps)/\log(\delta)$, 
we obtain $y\in\R$ with $|y-\lambda(f)|\le\eps$.
\end{remark}

\medskip
We now focus on Lipschitz continuous measurements.
Recall that the modulus of continuity of $S$ is defined as
$$ \omega_S(\gamma)=\sup\left\{\dG(S(f),S(g))\mid\, \dF(f,g)\le\gamma\right\}.$$
We will also need a `modified' modulus
$$ \widetilde\omega_S(\gamma) = \gamma \cdot \sup\left\{\frac{\dG(S(f),S(g))}{\dF(f,g)} \mid\, \dF(f,g)\le\gamma\right\}.$$
Note that
$\omega_S(\gamma)\le\widetilde\omega_S(\gamma)\le\gamma\,\mathrm{lip}(S),$
where $\mathrm{lip}(S)$ is the Lipschitz constant for $S.$

\begin{theorem} \label{thm:lip}
If the mapping $S$ is uniformly continuous then for all $n\in\N$ and $\delta<1,$ we have
\begin{equation}\label{eq:l} \frac12\,\omega_S\left(\frac{2\delta}{L}\right)\,\le\,e^{\Lip}_{n,L}(S,\delta) \,\le\, \e_n(S)\,+\,n\cdot\widetilde \omega_S\!\left(\frac{2\delta}{L}\right). 
\end{equation}
Otherwise for all $n\in\N$ and $\delta>0$ we have $$e_{n,L}^\Lip(S,\delta)\ge c$$ 
with $c=\lim_{\gamma\to 0^+}\omega_S(\gamma)/2>0.$ 
\end{theorem}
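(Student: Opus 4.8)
The plan is to prove the Lipschitz version of Theorem~\ref{thm:lip} in two parts: the upper and lower bounds in the uniformly continuous case, and then the separate lower bound when $S$ fails to be uniformly continuous.

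\medskip
\textbf{Upper bound.} My approach is to mimic the bisection argument used in the proof of Theorem~\ref{thm:cont}, but to replace the genuinely continuous distance functional $\lambda$ in~\eqref{eq:f} by a Lipschitz one with Lipschitz constant at most $L$. The natural candidate is a truncated and rescaled distance function of the form $f\mapsto \tfrac{2}{\eta}\min\{\dG(S(f),\bigcup_{i\in I}B_i),\,\delta^+\eta\}-1$, whose Lipschitz constant with respect to $\dF$ is governed by the modulus of $S$: if $\dF(f,g)\le\gamma$ then the output differs by at most $\tfrac{2}{\eta}\dG(S(f),S(g))$, so to force the constant down to $L$ one must choose $\eta$ proportional to $\widetilde\omega_S(2\delta/L)$, matching the $\widetilde\omega_S$ appearing in~\eqref{eq:l}. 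Each of the $n$ bisection steps may fail to pin down $S(f)$ exactly; instead of enlarging the balls by $\delta\eta$ as before, one enlarges them by roughly $\widetilde\omega_S(2\delta/L)$ per step, and summing over the $n$ steps yields the additive error $n\cdot\widetilde\omega_S(2\delta/L)$ on top of the entropy term $\e_n(S)$. The main technical point is to verify carefully that the chosen functional really is Lipschitz with constant $L$ (not merely continuous) and to track exactly how the noise $\delta$ and the rescaling interact so that the per-step enlargement is the modified modulus rather than the ordinary one.

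\medskip
\textbf{Lower bound.} For the lower bound $\tfrac12\omega_S(2\delta/L)$ I would exploit the fact that $L$-Lipschitz functionals with range in $[-1,1]$ cannot separate two inputs that are too close together. Concretely, choose $f,g\in F$ with $\dF(f,g)\le 2\delta/L$ and $\dG(S(f),S(g))$ close to $\omega_S(2\delta/L)$. For any $L$-Lipschitz functional $\lambda$ we have $|\lambda(f)-\lambda(g)|\le L\cdot\dF(f,g)\le 2\delta$, so there exists a common value $y$ with $|y-\lambda(f)|\le\delta$ and $|y-\lambda(g)|\le\delta$; the same argument applies at every adaptive step, so the two inputs $f$ and $g$ can produce identical noisy information $\mathbf y$. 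The reconstruction $\varphi(\mathbf y)$ is then a single point whose distance to at least one of $S(f),S(g)$ is at least $\tfrac12\dG(S(f),S(g))$, by the triangle inequality. Taking the supremum over admissible pairs gives $e^{\Lip}_{n,L}(S,\delta)\ge\tfrac12\omega_S(2\delta/L)$.

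\medskip
\textbf{Non-uniformly-continuous case.} When $S$ is not uniformly continuous, the quantity $c=\lim_{\gamma\to 0^+}\omega_S(\gamma)/2$ is strictly positive, and the lower-bound argument above survives essentially verbatim: for every $\gamma>0$ one can find a pair $f,g$ with $\dF(f,g)\le 2\delta/L$ forcing indistinguishable information, and $\omega_S$ no longer tends to $0$, so the resulting bound does not degrade to zero as one refines. I expect the main obstacle to lie in the upper-bound step, specifically in confirming that replacing the continuous functional by a truncated distance-type functional keeps the Lipschitz constant controlled by exactly $\widetilde\omega_S(2\delta/L)$ rather than the cruder $\gamma\,\mathrm{lip}(S)$, since this is precisely where the sharper modified modulus (rather than the ordinary modulus) must be used to get the stated per-step enlargement.
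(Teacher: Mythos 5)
Your proposal is correct and follows essentially the same route as the paper: the same truncated distance functional from~\eqref{eq:f} with $\eta$ chosen proportional to $\widetilde\omega_S(2\delta^+/L)/\delta^+$ for the upper bound, the same two-point indistinguishability argument for the lower bound, and monotonicity of $\omega_S$ for the non-uniformly-continuous case. The verification you flag as the main technical point does go through exactly as you anticipate: for $\dF(f,g)\le 2\delta^+/L$ the first term of the $\min$ is at most $L\,\dF(f,g)$ by definition of $\widetilde\omega_S$, for larger $\dF(f,g)$ the truncation bounds $|\lambda(f)-\lambda(g)|$ by $2\delta^+\le L\,\dF(f,g)$, and letting $\delta^+\searrow\delta$ (using continuity of $\widetilde\omega_S$, which follows from uniform continuity of $S$) yields the stated bound.
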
  

\begin{proof}
To show the upper bound in \eqref{eq:l} we refer to the proof of Theorem~\ref{thm:cont}. Observe that the functionals $\lambda$ given by \eqref{eq:f} satisfy
\[
 |\lambda(f)-\lambda(g)| \,\le\, \min\left\{ \frac{2}{\eta} d_G(S(f),S(g)),\, 2 \delta^+ \right\}.
\]
If we take $\eta=\widetilde\omega_S(2\delta^+/L)/\delta^+$, the first term in the min is bounded by $L\,d_F(f,g)$ in the case $d_F(f,g) \le 2\delta^+/L$; in the other case the second term is bounded by $L\,d_F(f,g)$. So $\lambda$ has a Lipschitz constant at most $L$ and we obtain the upper error bound 
$$e_{n,L}^{\Lip}(S,\delta)\,\le\,\e_n(S)+n\delta\eta=\e_n(S)+n\bigg(\frac{\delta}{\delta^+}\bigg)\widetilde\omega\bigg(\frac{2\delta^+}{L}\bigg).$$ 
Uniform continuity of $S$ implies continuity of $\tilde\omega_S.$ Letting $\delta^+\searrow\delta$ we get the desired bound.




\smallskip
We now show the lower bound in \eqref{eq:l}.
Let $\gamma=2\delta/L.$ For $\epsilon>0,$ let $f,g\in F$ such that $\dF(f,g)\le\gamma$ 
and $\dG\big(S(f),S(g)\big)\ge\omega_S(\gamma)-\epsilon.$ 
Then for any functional $\lambda$ we have  
$$|\lambda(f)-\lambda(g)|\le L\,\dF(f,g)\le 2\delta,$$ 
which means that $f$ and $g$ are indistinguishable with respect to noisy information $y=(\lambda(f)+\lambda(g))/2.$ Therefore the error of any approximation is at least
$$\dG\big(S(f),S(g)\big)/2\ge\omega_S(\gamma)/2-\epsilon/2,$$
as claimed, since $\epsilon$ can be arbitrarily small.

\smallskip
To show the remaining part of the theorem it is enough to use the known property that the lack of uniform continuity of $S$ implies that the modulus is not continuous at zero.
\end{proof}

\smallskip
While it is not clear whether the upper bound of Theorem~\ref{thm:lip} is sharp, the lower bound in \eqref{eq:l} cannot be improved in general. Indeed, consider the problem of Example~\ref{example1}. Let $L=\delta^{1-r}.$ Then the functionals \eqref{lij} are allowed and we have an algorithm with error $\delta^r=\delta/L=\omega_S(2\delta/L)/2.$ 

\begin{remark}
Since in the proof of the lower bound of Theorem~\ref{thm:lip} we did not use the assumption \eqref{eq:funals}, this bound remains valid if the range of the information functionals is extended from $[-1,1]$ to $\mathbb R.$ Then the error $\omega_S(2\delta/L)/2$ for the problem of Example~\ref{example1} can be attained using just $m$ (instead of $r\,m$) linear measurements $\lambda_i(x)=Lx_i,$ $1\le i\le m,$ and $A_m(x)=\mathbf y/L,$ for noisy information $\mathbf y$ about $x.$
\end{remark}

\begin{remark}
Let us also compare with the work 
on \emph{stable Lipschitz widths} from~\cite{CDPW}. 
In our language, 
they show that for every algorithm based on (arbitrary) noisy information, there is a non-adaptive Lipschitz-continuous algorithm that is not so much worse, at least if we approximate the identity $S\colon G\to G$ on 
a compact subset $F\subset G$ of a Banach space $G$. In fact,  
we obtain from Theorem~5.1 of~\cite{CDPW}
that there is an algorithm $A=\phi\circ N$, 
with 
$N\colon G\to\ell_\infty^n$ and 
$\phi\colon \ell_\infty^n\to G$, 
both being Lipschitz with ``constant'' $C n^{5/4}$, such that 
\begin{equation*}
\sup_{f\in F} \|f - \phi\circ N(f)\|_G 
\,\lesssim\, n^{5/2}\,\e_{cn}(F). 
\end{equation*}
By \emph{rescaling} the above algorithm we can assume that $\phi$ has Lip-constant $C n^{5/2}$, and $N$ has Lip-constant $\asymp 1$. 

If $\gamma>0$ is now the Lipschitz constant of $\phi$, 
then we also have 
\begin{multline*}
\sup_{f\in F}\sup_{z\in\R^n\colon \|z\|_\infty\le\delta} \|f - \phi\left( N(f)+z\right)\|_G \\
\,\lesssim\, n^{5/2}\,\e_{cn}(F) + \gamma \delta 
\,\lesssim\, n^{5/2}\,(\e_{cn}(F) + \delta). 
\end{multline*}
This implies 
\[
e_n^{\cont{\rm -non}}(S,\delta) \,\lesssim\, n^{5/2}\,(\e_{cn}(F) + \delta), 
\]
where $e_n^{\cont-{\rm non}}$ is the $n$th minimal error of non-adaptive algorithms based on continuous measurements. With Theorem~\ref{thm:entropy}, we arrive at
\begin{equation*}
    e_{c k_\delta \cdot n}^{\cont-{\rm non}}(S,\delta) \,\lesssim\, (k_\delta\, n)^{5/2}\,(e_n^\arb(S,\delta) + \delta)
\end{equation*}
with a (different) absolute constant $c>0$ and $k_\delta =\lceil\log(1/\delta)\rceil$.
\end{remark}

\section{Diagonal operators}\label{sec:diago}

For $1\le p\le\infty$ and $\sigma=(\sigma_1,\sigma_2,\ldots)$ with 
$$\sigma_1\ge\sigma_2\ge\sigma_3\ge\cdots\ge 0, 
$$ 
we consider a diagonal operator $D_\sigma:B_p\to\ell_p$ defined by $$D_\sigma(x_1,x_2,x_3,\ldots)=(\sigma_1x_1,\sigma_2x_2,\sigma_2x_3,\ldots),$$
where $B_p$ is the unit ball of $\ell_p$. For the entropy numbers, and all $p$, we have the formula
$$\varepsilon_n(D_\sigma)\;=\;\kappa_n\,\sup_{k\ge 1}\,2^{-n/k}(\sigma_1\sigma_2\cdots\sigma_k)^{1/k}$$
with $\kappa_n\in[1,6],$ see \cite[Prop. 1.7]{GKS87} and \cite{kuehn05}. In particular, if $\sigma_j=j^{-s}$ then $\varepsilon_n(D_\sigma)\asymp\sigma_n=n^{-s},$ where the factor in the '$\asymp$' notation depends on $s.$ 

\medskip
Now, we deal with the $n$th minimal error of measurement-based approximations of $x=(x_1,x_2,\ldots)$ in the unit ball of $\ell_p.$ 

\subsection
{Exact measurements, $\delta=0$} 

It is well known that in the case of linear measurements and any $p$, the $n$th optimal measurements are $\lambda_i(x)=x_i,$ $1\le i\le n,$ and the optimal approximation is $A_n(x)=(\sigma_1x_1,\sigma_2x_2,\ldots,\sigma_nx_n,0,0,0,\ldots).$ This gives the $n$th minimal error $$e_n^\mathrm{lin}(D_\sigma,0)_p=e(A_n,0)_p=\sigma_{n+1}.$$
(We add the subscript $p$ to indicate the space we are in.)

\smallskip
For continuous measurements we can apply the algorithm from \cite{KNU25} to recover from $x$ its coefficients $x_1,x_2,\ldots,x_m$ with arbitrarily small error using $\lceil\log_2(m+1)\rceil+1$ (adaptive) measurements. Hence, with $n$ measurements one can recover the coefficients $x_i,$ $1\le i\le m=2^{n-2}-1$ with arbitrarily small error. This gives
$$e_n^\cont(D_\sigma,0)_p\le\sigma_{2^{n-2}}.$$
In particular, for $\sigma_j=j^{-s}$ we have an exponential speed-up, since $e_n^\cont(D_\sigma,0)_p\le 2^{-s(n-2)}.$ The lower bound on $e_n^\cont(D_\sigma,0)_p$ is unclear. To complete the picture for exact measurements we also mention that $e_n^\mathrm{arb}(D_\sigma,0)=0.$

\subsection{Noisy measurements, $0<\delta<1$} 
For linear measurements $\lambda$ we assume that $||\lambda\|_p\le 1,$ cf. Remark~\ref{rem0}. Consider approximations $\widetilde A_n$ similar to those from the linear case and exact measurements above, i.e., $\widetilde A_n(x)=(\sigma_1y_1,\sigma_2y_2,\ldots,\sigma_ny_n,0,0,0\ldots),$ where $|y_i-x_i|\le\delta,$ $1\le i\le n.$ Then for $1\le p<\infty$ we have
\begin{eqnarray}\nonumber
e(\widetilde A_n,\delta)_p &=& \bigg(\sup_{\|x\|_p\le 1}\,\sup_{|y_i-x_i|\le\delta}\sum_{i=1}^n\sigma_i^p|y_i-x_i|^p+\sum_{j=n+1}^\infty\sigma_j^p|x_j|^p\bigg)^{1/p}\\
&=&\bigg(\delta^p\sum_{i=1}^n\sigma_i^p+\sigma_{n+1}^p\bigg)^{1/p}, \label{ptherr}
\end{eqnarray}
while for $p=\infty$ we have $e(\widetilde A_n,\delta)_\infty=\max(\delta\sigma_1,\sigma_{n+1}).$ 

For a lower bound we have that for any $p$ the error of any approximation using $n$ measurements is at least $\max(\delta\sigma_1,\sigma_{n+1}).$ Indeed, since for any $x$ with $\|x\|_p\le\delta$ and any $\lambda$ it holds that $|\lambda x|\le\delta,$ all elements of the ball of radius $\delta$ and center at $0$ are indistiguishable with respect to zero information. Hence, on one hand, the error of any approximation is lower bounded by
$$\sup\{\|D_\sigma x\|_p\,\mid\;\|x\|_p\le\delta\}=\delta\sigma_1,$$
and, on the other hand, it is not smaller than the minimal error $\sigma_{n+1}$ from exact measurements. This means that
$$e_n^\mathrm{lin}(D_\sigma,\delta)_\infty=\max(\delta\sigma_1,\sigma_{n+1}).$$ 
Unfortunately, the approximation $\widetilde A_n$ is $n$th optimal only for $p=\infty.$ What is the exact value of $e_n^\mathrm{lin}(D_\sigma,\delta)_p$ for $p<\infty$ is an open problem.

\begin{remark}
Suppose $p=2.$ Then the $n$th minimal error is known exactly in very special cases only. One of these is when $\sigma_{n+1}=0$ and the worst case error is taken over the whole space $\ell_2$ (instead of the unit ball). Then \eqref{ptherr} is valid and the $n$th minimal error equals 
$\delta\sqrt{\sum_{i=1}^n\sigma_i^2},$ see the Appendix of \cite{PS23}.

It is also worthwhile to mention that much more is known if we assume that the noise is bounded in the $\ell_2$-norm (instead of $\ell_\infty$), i.e., $\sum_{i=1}^n|y_i-\lambda_ix|^2\le\delta^2.$ Then the corresponding $n$th minimal error equals
$$\widehat e_n^{\,\lin}(D_\sigma,\delta)_2=\sqrt{\sigma_{n+1}^2+\frac{\delta^2}{n}\sum_{j=1}^n(\sigma_j^2-\sigma_{n+1}^2)},$$
see \cite[p.79]{Pl96}. Obviously $\widehat e_n^{\,\lin}(D_\sigma,\delta)_2\le e_n^{\lin}(D_\sigma,\delta)_2\le \widehat e_n^{\,\lin}(D_\sigma,\delta\sqrt{n})_2.$ 
\end{remark}

\smallskip
We switch to continuous measurements. In this case we have the bounds of Theorems~\ref{thm:entropy} and \ref{thm:cont},
\begin{equation}\label{conteq}
\varepsilon_{n\lceil\log_2(1/\delta)\rceil}(D_\sigma)\le e_n^\cont(D_\sigma,\delta)_p\le\varepsilon_n(D_\sigma).
\end{equation}
Let's see what we can obtain if we apply the method of Example~\ref{example1} relying on reducing uncertainties of the successive coordinates. In what follows we assume $p=\infty$ and $\delta<1/2.$ Then, to get an approximation with error at most $\varepsilon$, it is enough to reduce the `uncertainty' of $\sigma_ix_i$ to~$\varepsilon,$ for all $i\le m=\min\{k:\,\sigma_{k+1}\le\varepsilon\}.$ This can be done with the help of $$n_i=\bigg\lceil\frac{\ln(\sigma_i/\varepsilon)}{\ln(1/\delta)}\bigg\rceil$$ measurements. Hence, in total, we have error $\varepsilon$ at cost  
\begin{equation}\label{ncost}
n=\sum_{i=1}^m n_i=\frac{\ln(\frac{\sigma_1\sigma_2\cdots\sigma_m}{\varepsilon^m})}{\ln(1/\delta)}+r\qquad\mbox{with}\quad r\in[0,m].
\end{equation}
If the problem is finite dimensional, i.e., $\sigma_m>\sigma_{m+1}=0$ for some $m$ (which is the case of Example~\ref{example1}, where $\sigma_k=1$ for all $1\le k\le m$), then 
$$n\le \frac{m\log_2(1/\varepsilon)+\log_2(\sigma_1\sigma_2\cdots\sigma_m)}{\log_2(1/\delta)}+m\approx m\,\frac{\log_2(1/\varepsilon)}{\log_2(1/\delta)}\quad (\mbox{as}\;\varepsilon\to 0).$$
(Here `$\approx$' means the asymptotic equality.) Thus using $n$ continuous measurements one can obtain an approximation with error asymptotically equal to at most $\delta^{n/m},$ and therefore the lower bound in \eqref{conteq} is achieved, i.e.,
$$e_n^\mathrm{con}(D_\sigma,\delta)_\infty\approx \delta^{n/m}=2^{-\frac{n\log_2(1/\delta)}{m}}\asymp\varepsilon_{n\lceil\log_2(1/\delta)\rceil}(D_\sigma)\qquad (\mbox{as}\;n\to\infty).$$

However, an analogous result does not hold when all $\sigma_k$'s are positive. Indeed, as all $n_i$'s in \eqref{ncost} are positive integers then applying our method with $n$ measurements one can obtain the error not smaller than $\sigma_{n+1}\asymp\varepsilon_n(D_\sigma).$
 
Take as an example $\sigma_k=k^{-s}$ and, for simplicity, $\varepsilon=\sigma_{m+1}.$ Then, by Stirling's formula,
$$\frac{\sigma_1\sigma_2\cdots\sigma_m}{\sigma_{m+1}^m}=\bigg(\frac{(m+1)!}{(m+1)^{m+1}}\bigg)^{-s}\approx\bigg(\frac{\mathrm e^{m+1}}{\sqrt{2\pi(m+1)}}\bigg)^s\quad (\mbox{as}\;m\to\infty)$$ and
$$\frac{\ln(\frac{\sigma_1\sigma_2\cdots\sigma_m}{\varepsilon^m})}{\ln(1/\delta)}\approx s\log_2\!\mathrm e\bigg(\frac{m+1}{\log_2(1/\delta)}\bigg).$$
This would give us the upper bound on $e_n^\mathrm{con}(D_\sigma,\delta)_\infty$ proportional to $\big(n\log_2(1/\delta)\big)^{-s}$ if it was not for the component $r$ in \eqref{ncost}. The best we can get is only
$$n\lessapprox m\bigg(1+s\frac{\log_2\!\mathrm e}{\log_2(1/\delta)}\bigg)\le (1+s\log_2\!\mathrm e)\,\varepsilon^{-1/s}$$
and, as a consequence, 
$$e_n^\cont(D_\sigma,\delta)_\infty\lesssim \varepsilon_n(D_\sigma),$$
as we already knew from~\eqref{conteq}.

This is only an upper bound, the actual behavior of $e_n^\cont(D_\sigma,\delta)$ for infinite dimensional $D_\sigma$ is unknown.

\smallskip
For arbitrary measurements we have the formula for $e_n^\mathrm{arb}(D_\sigma,\delta)_p$ given by Theorem~2. In particular, for $\sigma_k=k^{-s}$ we have 
$$e_n^\mathrm{arb}(D_\sigma,\delta)_p\asymp\big(n\lceil\log_2(1/\delta)\rceil\big)^{-s}.$$

\linespread{0.9}



\newpage

\noindent
\address{D.K., 
University of Passau; 
\texttt{david.krieg@uni-passau.de};  \\
E.N., 
Friedrich Schiller University Jena; 
\texttt{erich.novak@uni-jena.de}; \\
L.P., 
University of Warsaw;
\texttt{leszekp@mimuw.edu.pl};\\
M.U., 
Johannes Kepler University Linz;
\texttt{mario.ullrich@jku.at}
}

\end{document}